\newcommand{\cc}{\mathcal}
\newcommand{\bb}{\mathbb}
\newcommand{\bacc}{\left\{}
\newcommand{\eacc}{\right\}}
\newcommand{\bp}{\left(}
\newcommand{\ep}{\right)}
\newcommand{\bint}{\left[}
\newcommand{\eint}{\right]}
\newcommand{\rn}{$\mathbb{R}^N$}
\newcommand{\sla}{\backslash}
\newcommand{\bdisp}{\begin{displaymath}}
\newcommand{\edisp}{\begin{displaymath}}
\newcommand{\bsplit}{\begin{split}}
\newcommand{\esplit}{\end{split}}
\newtheorem{defi}{Definition}[section]
\newtheorem{lem}[defi]{Lemma}
\newtheorem{pro}[defi]{Proposition}
\newtheorem{thm}[defi]{Theorem}
\newtheorem{cor}[defi]{Corollary}
\begin{document}
\title{Comparison of harmonic kernels associated to a class of semilinear elliptic
equations}
\author{Mahmoud~Ben~Fredj \\ {\normalsize\em Facult\'e des Sciences de Monastir
}\\{\normalsize\em Avenue de l'Environnement, 500 Monastir,
Tunisia}\\
{\small E-mail: mahmoudbenfredj@yahoo.fr}\bigskip\\
Khalifa~El~Mabrouk\\ {\normalsize\em D\'epartement de
Math\'ematique}
\\{\normalsize\em Ecole Sup\'erieure des Sciences et de Technologie de Hammam Sousse}
\\{\normalsize\em Rue Lamine El Abbassi, 4011 Hammam Sousse, Tunisia}
\\{\small E-mail: khalifa.elmabrouk@fsm.rnu.tn} }
\date{}
\maketitle

\abstract{Let $D$ be a smooth domain in $\mathbb{R}^N$, $N\geq 3$ and let $f$
be a positive continuous function
 on $\partial D$. Under some assumptions on  $\varphi$, it is shown that the   problem $\Delta u=2\varphi(u)$
  in~$D$ and $u=f$ on $\partial D$,  admits a unique solution which will  be denoted by $H_D^\varphi f$.
   Given two functions $\varphi$ and $\psi$, our main goal in this paper is to investigate the existence of a constant $c>0$ such that
$$\frac{1}{c}H_D^\varphi f\leq H_D^\psi f\leq c H_D^\varphi f.$$
}

\section{Introduction}

Let $D$ be a bounded smooth domain in $\bb{R}^N,N\geq 3.$ We
consider the following semilinear problem
\begin{equation}\label{e31}
\left\{\begin{array}{rlll}\Delta u&=&2\varphi(u)& \mbox{in}\ D,\\
u&=&f&\mbox{on}\ \partial D,\end{array}\right.
\end{equation}
where $f$ is a positive continuous function on~$\partial D.$ Under
some conditions on~$\varphi,$ it will be shown that
problem~(\ref{e31}) admits a unique solution which will be denoted
by~$H_D^\varphi f.$ In the particular case where
 $\varphi\equiv 0,$ (\ref{e31}) reduces to the classical Dirichlet problem whose the unique solution will
 be denoted by $H_Df.$

 Given two functions $\varphi$ and $\psi,$ we say that $H_D^\varphi f$ and
 $H_D^\psi f $ are proportional and we write $H_D^\varphi f \approx H_D^\psi f$
 if there exists  $c>0$ such that for every $x\in D$,
 $$
 \frac{1}{c}H_D^\psi f (x)\leq H_D^\varphi f (x)\leq c H_D^\psi f (x).
 $$
The operators $H_D^\varphi $ and
 $H_D^\psi $ are said to be proportional (we write $H_D^\varphi \approx H_D^\psi $) if $H_D^\varphi f$ and
 $H_D^\psi f $ are proportional for every positive continuous
 function~$f$ on~$\partial D$.

The main goal of this paper is to study the
 proportionality between  $H_D^\varphi$ and  $H_D^\psi$. To this end,  we shall  rather compare $H_D^\varphi$ to $H_D$. Since
$f$ is positive on~$\partial D$, it is very simple to observe that
$H_D^\varphi f\leq h$ where $h$ is the positive harmonic function
$H_D f$. Hence, the key question is whether $H_D^\varphi f\geq c h$
for some positive constant~$c$.

 There are several papers dealing with the existence of  solutions to semilinear problems which are bounded below by a harmonic function
   (see \cite{2,3,4,8} and their references). The second author~\cite{8} studied the problem
\begin{equation}\label{e41}
\left\{\begin{array}{rlll}\Delta u+\xi(x)\Psi(u)&=&0& \mbox{in}\ D,\\
u&>&h&\mbox{in}\  D,\\
u-h&=&0& \mbox{on}\ \partial D,\end{array}\right.
\end{equation}
 where $h\geq 0$ is harmonic in $D,$ $\xi\geq 0$ is locally bounded and $\Psi>0$ is a nonincreasing continuous
 function on  $]0,\infty[$. He proved  that (\ref{e41}) admits a unique solution
  provided the function
$$
x\mapsto \int_D G_D(x,y)\xi(y)\,dy
$$
 is continuous on $D$ and vanishes on $\partial D$, where
 $G_D(\cdot,\cdot)$ denotes the Green function of~$\Delta$ on~$D$ (see (\ref{gf})
 below).

Athreya \cite{3} considered the problem (\ref{e31})  where
$\varphi:\bb{R}_+\rightarrow \bb{R}_+$ is
   locally H\"older continuous and decays to~$0$ at the same rate as  $ t^p,0<p<1.$
    Given a  function $h_0$ which is continuous on $\overline{D}$ and harmonic in~$D,$  he showed
    that there exists $c>1$ such that for every continuous function~$f$ on~$\partial D$ satisfying
$$
f\geq c h_0\quad\mbox{on }\partial D,
$$
 problem (\ref{e31}) has a unique solution which is bounded below by~$h_0.$ By probabilistic techniques,
   Chen, Williams and Zhao investigated in~\cite{4} the same  problem  where  $-t\leq\varphi(t)\leq t.$
    They proved the existence of a solution bounded below by a positive harmonic function provided the norm of $f$ is
       sufficiently small.

The problem (\ref{e31}), with $\varphi (t)=t^p$, was already studied
by Atar, Athreya and Chen in \cite{2}. They  showed that the
proportionality of   $H_D^\varphi$ and $H_D$ holds true if $p\geq
1.$  In this direction we shall prove in this paper that
$H_D^\varphi\approx H_D$ for a large class of functions $\varphi$.
 On the other hand, again in \cite{2}, it was conjectured  that $H_D^\varphi$ and $H_D$ are not proportional when
  $\varphi$ is given by  $\varphi (t)=t^p,$ $0\leq p<1.$ Here, we shall prove this conjecture. More precisely,  we
   give a sufficient condition on $\varphi$ under which the proportionality does not hold.

After recalling in the following section some basic facts on
Brownian motion, we establish in Section~3 the existence of a unique
solution to problem~(\ref{e31}) where $\varphi:\bb{R}_+\to
\bb{R}_+$ is continuous nondecreasing and satisfies $\varphi(0)=0$.

In Section~4 we are concerned with the proportionality between
$H_D^\varphi$ and the harmonic kernel  $H_D$. We prove that the
proportionality holds true provided
\begin{equation}\label{elimsup}
\limsup_{t\rightarrow0}\frac{\varphi(t)}{t}<\infty,
\end{equation}
and does not hold if for some $\varepsilon>0,$
\begin{equation}\label{e241}
 \int_{0}^{\varepsilon}\bp \int_{0}^{t}\varphi(s)ds\ep^{-\frac{1}{2}}dt<\infty.
\end{equation}

 Seeing that condition (\ref{e241}) is valid  for $\varphi(t)=t^p$ with $0\leq p<1,$ the second part of our above result  gives an
  immediate proof of the conjecture  mentioned above.

  The last  section  will be devoted to investigate problem~(\ref{e31}) in the
  case where the function $\varphi$ is nonincreasing.

\section{Preliminaries}

 For every subset  $F$ of $\bb{R}^N$, let $\cc{B}(F)$ be the set of all Borel measurable functions on $F$ and
 let $\cc{C}(F)$ be the set of all continuous real-valued functions on $F.$ If $\cc{G}$ is a set of numerical
  functions then $\cc{G}^+$ (respectively $\cc{G}_b$) will denote the class of all functions in $\cc{G}$ which
  are nonnegative (respectively bounded). The uniform convergence norm will be denoted by $\left\|.\right\|.$

Let $(\Omega,\cc{F},\cc{F}_t,X_t,P^x)$ be the canonical Brownian
motion on the Euclidian space \rn, $N\geq 3$: $\Omega $ is the set
of all continuous functions from $\left[0,\infty\right[$ to~\rn
endowed with its Borel $\sigma$-algebra $\cc{F}.$ For every $t\geq
0$ and $\omega\in\Omega,$
$$
X_t(\omega)=\omega(t)\quad\mbox{and}\quad \cc{F}_t:=\sigma(X_s;0\leq
s\leq t).
$$
Moreover, for every $x\in \bb{R}^N,$ $P^x$ is the probability
measure on $(\Omega,\cc{F})$ under which the Brownian motion starts
at $x$ (i.e., $P^x(X_0=x)=1$) and $E^x[\cdot]$  denotes the
corresponding expectation. Let
 $D$ be a bounded domain in \rn and let   $\tau_D$ be  the first exit time from $D$ by $X,$
 i.e.,
$$
\tau_D=\inf\left\{t>0;X_t\notin D\right\}.
$$
Let us denote by $(X_t^D)$ the Brownian motion killed upon exiting
$D.$ It is well known that the transition density is given by
$$
p^D(t,x,y)=p(t,x,y)-r^D(t,x,y);\quad \ t>0,\ x,y\in D,
$$
\begin{displaymath}
\begin{split}
\mbox{where}\qquad&p(t,x,y)=\frac{1}{(2\pi
t)^{N/2}}\exp\left({-\frac{\left|x-y\right|^2}{2
t}}\right)\\
\mbox{and}\qquad
&r^D(t,x,y)=E^x\left[ p(t-\tau_D,X_{\tau_D},y),\tau_D< t\right].
\end{split}
\end{displaymath}
The corresponding semigroup is then defined by
$$P^D_tf(x)=E^x\left[f(X_t) , t<\tau_D \right]=\int_D p^D(t,x,y)f(y)dy,\quad x\in D,$$
for every Borel measurable function $f$ for which this integral
makes  sense.

Let $h$ be a positive harmonic function in $D.$ We define for
$x,y\in D,t>0$,
$$
p_h^D(t,x,y)=p^D(t,x,y)\frac{h(y)}{h(x)}.
$$
  There exists a Markov process, called the $h$-conditioned Brownian motion, with state space $D$
   and having $p_h^D$ as  transition density  (see \cite{12,5,6}).  The corresponding probability
    measures is denoted by  $(P^x_h)_{x\in D}:$ for every Borel subset $B$ of $D$ we
    have
\begin{displaymath}
\begin{split}
 P_h^x(X_t\in B)&=\frac{1}{h(x)}\int_B p^D(t,x,y) h(y)\,dy\\
 &=\frac{1}{h(x)}E^x\left[h(X_t), X_t\in B,t<\tau_D \right].
\end{split}
\end{displaymath}
Besides, using the monotone class theorem, it is easily seen that
for every $t>0$ and   every  $\cc{F}_t$-measurable randan  variable
$Z\geq 0$,
\begin{equation}
E^x_h\left[Z,t<\tau_D\right]=\frac{1}{h(x)}E^x\left[Z\,
h(X_t),t<\tau_D\right].
\end{equation}

 The open bounded subset $D$  is called regular (for $\Delta$) if each function $f\in\cc{C}(\partial D)$
  admits a continuous extension $H_Df$ on $\overline{D}$ such that $H_Df$ is harmonic in $D.$ In other words,
  the function $h=H_Df$ is the unique solution to the classical Dirichlet problem
$$
\left\{
\begin{array}{rrrl}
\Delta h&=&0& \textrm{in}\  D,\\
h&=&f&\textrm{on}\ \partial D.
\end{array}
\right.
$$
For every $x\in D,$ the harmonic measure relative to $x$ and $D,$
which will be denoted by $H_D(x,\cdot),$
 is defined to be the positive Radon measure on $\partial D$ given by the mapping $f\mapsto H_Df(x).$

In the sequel, we always assume that $D$ is regular and let $x_0\in
D$ be a fixed point. There exists a unique function $K_D:D\times
\partial D\rightarrow \bb{R}_+$ satisfying:
\begin{enumerate}
\item[] For every $z\in \partial D,$ $K_D(x_0,z)=1.$
\item[] For every $x\in D,$ $K_D(x,\cdot )$ is continuous in  $\partial D.$
\item[] For every $z\in \partial D,$ $K_D(\cdot ,z)$ is a positive harmonic function in  $D.$
\item[] For every $z,w\in \partial D$ such that  $z\neq w,$  $\lim_{x\rightarrow w}K_D(x,z)=0.$
\end{enumerate}
We  extend the function $K_D(\cdot ,z)$ to
$\overline{D}\sla\left\{z\right\}$ by letting
  $K_D(w,z)=0$ for every  $w\in \partial D\sla \left\{z\right\}.$ The function $K_D$  is called the Martin kernel on   $D.$

The Green function $G_D(\cdot,\cdot)$ is defined on $D\times D$ by
\begin{equation}\label{gf}
G_D(x,y)=\int_{0}^{\infty}p^D(t,x,y)dt.
\end{equation}
It is well known that $G_D$ is continuous (in the extended sense) on
$D\times D,$
 \begin{displaymath}
  G_D(x,y)\leq G_{\mathbb{R}^N}(x,y)=\frac{\Gamma(\frac{N}{2}+1)}{2\pi^{\frac{N}{2}} |x-y|^{N-2}},
 \end{displaymath}
  and $\lim_{x\rightarrow z}G_D(x,y)=0$
  for every $z\in\partial D$ (see \cite[chapter 4]{10}). Moreover
\begin{equation}
K_D(x,z)=\frac{dH_D(x,\cdot)}{d H_D(x_0,\cdot)}(z)=\lim_{y\in D,
y\rightarrow z}\frac{G_D(x,y)}{G_D(x_0,y)};\quad \ x\in
D,z\in\partial D.
\end{equation}

  For $h=K_D(\cdot,z)$ where $z\in\partial D$, the $h$-conditioned Brownian motion will be simply called
   the $z$-Brownian motion and its transition density is given by
 $$p^D_z(t,x,y)=\frac{1}{K_D(x,z)}p^D(t,x,y)K_D(y,z); \ \ t>0,x,y\in D.$$
The corresponding probability measures family will be denoted by
 $(P^x_z)_{x\in D}.$

\section{Semilinear problem}

In the sequel, we assume that  $\varphi:\bb{R}_+\rightarrow
\bb{R}_+$ is a continuous nondecreasing function such that
$\varphi(0)=0.$ The following  comparison principle  will be useful
to prove not only uniqueness but also the existence
 of a solution to  problem~(\ref{e31}). A more general comparison principle can be found in \cite{8}.

\begin{lem}\label{cprin}
Let $\Psi\in\cc{B}(\bb{R})$ be a nondecreasing function and let
$u,v\in \cc{C}(\overline{D})$ such that
$$
\Delta u\leq \Psi(u)\qquad \mbox{and} \qquad \Delta v\geq \Psi(v)\qquad \mbox{in}\ D.
$$
If $u\geq v$ on $\partial D,$ then $u\geq v$ in $D.$
\end{lem}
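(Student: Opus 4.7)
The strategy is a standard contradiction via the maximum principle. Suppose, towards a contradiction, that the open set
$$A=\{x\in D : v(x)>u(x)\}$$
is non-empty. Since $u,v\in\cc{C}(\overline{D})$ and $u\geq v$ on $\partial D$, the set $A$ is an open subset of $D$ and the continuous function $w:=v-u$ is strictly positive on $A$ and vanishes on $\partial A\cap D$, while $w\leq 0$ on $\partial A\cap \partial D$. So $w$ satisfies $\limsup_{x\to y}w(x)\leq 0$ for every $y\in \partial A$ (the boundary of $A$ taken in $\overline{D}$).

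On $A$ one has $v(x)>u(x)$, hence by monotonicity of $\Psi$
$$\Delta w=\Delta v-\Delta u\geq \Psi(v)-\Psi(u)\geq 0\qquad \text{in }A,$$
so $w$ is subharmonic on the bounded open set $A$. I would then invoke the maximum principle for subharmonic functions on a bounded domain: since $\limsup_{x\to y}w(x)\leq 0$ for every boundary point $y$ of $A$, this forces $w\leq 0$ on $A$, which contradicts $w>0$ on $A$. Hence $A=\emptyset$, i.e.\ $u\geq v$ throughout $D$.

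\textbf{Main obstacle.} The only delicate point is the regularity framework: the statement only assumes $u,v\in\cc{C}(\overline{D})$, so the inequalities $\Delta u\leq \Psi(u)$ and $\Delta v\geq \Psi(v)$ must be understood in the distributional (or equivalently, the mean-value) sense. One needs to check that $w=v-u$ is still subharmonic on $A$ in this weak sense, so that the maximum principle still applies. Here the monotonicity of $\Psi$ is essential: it is what converts the difference $\Psi(v)-\Psi(u)$ into a nonnegative distribution on $A$. Once this is observed, the weak maximum principle for distributional subharmonic functions on a bounded domain closes the argument, and no further growth or continuity assumption on $\Psi$ is needed beyond Borel measurability and monotonicity.
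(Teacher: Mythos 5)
Your proof is correct and is essentially the paper's own argument up to a sign change: the paper works with $w=u-v$ on the set $\Omega=\{w<0\}$ and applies the minimum principle for superharmonic functions, whereas you work with $w=v-u$ on the same set and apply the maximum principle for subharmonic functions. Your closing remark about interpreting the Laplacian inequalities distributionally and using the weak maximum principle is a reasonable clarification of a point the paper leaves implicit, but it does not change the approach.
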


\begin{proof}  Define $w=u-v$ and suppose that the open set
$$
\Omega=\left\{x\in D;w(x)<0\right\}
$$
 is not empty. Since $\Psi$ is nondecreasing, it is obvious that  $\Delta w\leq \Psi(u)-\Psi(v)\leq 0$
 in~$\Omega$, which means that  $w$ is superharmonic in~$\Omega.$ Furthermore,
  for every $z\in\partial\Omega\cap D$ we have $w(z)=0$ (because $w$ is continuous in~$D$),
  and for every  $z\in \partial \Omega\cap \partial D$ we have $\lim_{x\in \Omega,x\rightarrow z}w(x)\geq 0$
   (by hypothesis). Then  $w\geq 0$ in $\Omega$  by the  classical minimum principle for superharmonic
   functions. This yields a contradiction and therefore~$\Omega$ is
   empty. Hence $u\geq v$ in $D.$
\end{proof}

  The Green operator in $D$ is defined, for every Borel measurable function~$f$ for which the following
   integral exists,  by
  \begin{equation}\label{eog}
  G_Df(x)=\int_DG_D(x,y)f(y)dy,\qquad x\in D.
  \end{equation}
   Hence
  $$
  G_Df(x)=E^x\left[\int_{0}^{\tau_D}f(X_t)dt\right]=\int_{0}^{\infty}P_t^Df(x)dt,\quad x\in D.
  $$
We  recall that for every  $f\in \cc{B}_b(D)$,
 $G_Df$ is a bounded continuous function on~$D$ satisfying $\lim_{x\rightarrow z}G_Df(x)=~0$ for every  $z\in \partial D.$
 Moreover, it is simple to check that
 $$\Delta G_Df=-2f$$ in the distributional sense (see \cite{5,6}).

\begin{lem}\label{l221}
For every $M>0,$ the family  $\left\{G_Du;\left\|u\right\|\leq
M\right\}$ is relatively compact with respect to the uniform
convergence norm.
\end{lem}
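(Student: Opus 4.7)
The plan is to verify the hypotheses of the Arzelà--Ascoli theorem on the compact space $\overline{D}$, after first observing that each member of the family extends continuously to $\overline{D}$ by declaring $G_Du(z)=0$ for $z\in\partial D$. This extension is legitimate since the pointwise bound $|G_Du(x)|\leq M\,G_D\mathbf{1}(x)$, together with the facts recalled just before the statement that $G_D\mathbf{1}\in \cc{C}(\overline D)$ and $\lim_{x\to z}G_D\mathbf{1}(x)=0$ for $z\in\partial D$, forces $G_Du$ to have zero boundary limits.

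Uniform boundedness follows at once from $\|G_Du\|\leq M\|G_D\mathbf{1}\|<\infty$. For equicontinuity I would distinguish boundary and interior base points. At a point $z\in\partial D$, given $\varepsilon>0$ the continuity of $G_D\mathbf{1}$ at $z$ combined with $G_D\mathbf{1}(z)=0$ supplies a neighborhood $V$ of $z$ with $M\,G_D\mathbf{1}(x)<\varepsilon$ on $V\cap D$; the envelope $|G_Du(x)|\leq M\,G_D\mathbf{1}(x)$ then provides the desired control, uniformly in $u$.

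At an interior point $x_0\in D$, I would fix a ball $B=B(x_0,r)$ with $\overline{B}\subset D$ and, for $x,y\in B(x_0,r/2)$, start from
\[
|G_Du(x)-G_Du(y)|\leq M\int_D|G_D(x,z)-G_D(y,z)|\,dz,
\]
splitting the integral into contributions from $B$ and from $D\sla B$. The first piece is handled by the Newtonian estimate $G_D\leq G_{\bb R^N}$, which gives $\int_B G_D(x,z)\,dz=O(r^2)$ uniformly in $x\in B(x_0,r/2)$, hence small for small~$r$. On $D\sla B$ the kernel $G_D(\cdot,z)$ is jointly continuous and bounded by a constant depending only on $r$, so Lebesgue's dominated convergence theorem yields $\int_{D\sla B}|G_D(x,z)-G_D(y,z)|\,dz\to 0$ as $|x-y|\to 0$, independently of $u$.

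Combining uniform boundedness with pointwise equicontinuity at every point of $\overline{D}$, the Arzelà--Ascoli theorem delivers the relative compactness. The only delicate aspect is keeping every estimate independent of $u$; this is ensured throughout by reducing pointwise bounds to the envelope $M\,G_D\mathbf{1}$ and, in the interior case, by excising the diagonal singularity of $G_D$ before applying dominated convergence so that only properties of the Green kernel enter the limit.
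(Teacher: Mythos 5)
Your proof is correct and follows essentially the same route as the paper: uniform boundedness via the envelope $M\,G_D\mathbf{1}$, and equicontinuity by isolating the singularity of the Green kernel with the Newtonian bound and using its continuity away from the diagonal (the paper packages this as uniform integrability of $\{G_D(x,\cdot)\}$ plus Vitali's theorem, which is the same mechanism as your ball-splitting plus dominated convergence). Your explicit treatment of equicontinuity at boundary points is a welcome extra care that the paper's proof leaves implicit.
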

\begin{proof}
First, we recall that $x\mapsto G_D1(x)=E^x[\tau_D]$ is bounded
on~$D$ (see, e.g.,
\cite[page 23]{10}) and consequently for every $u$ such that $\left\|u\right\|\leq~
M$ we get
$$
\left\|G_Du\right\|\leq M \sup_{x\in D}E^x[\tau_D].
$$
Thus the family $\left\{G_Du;\left\|u\right\|\leq M\right\}$ is
uniformly bounded. Next, we claim  that the family
 $\left\{G_D(x,\cdot); x\in D\right\}$ is uniformly integrable.
 Indeed,
let $\varepsilon>0$ and   $\eta_0>0.$ There exist $c_1>0$ and
$c_2>0$ such that for every Borel subset $A$ of $D$,
\begin{displaymath}
\begin{split}
\int_A G_D(x,y)dy&\leq c_1 \int_A \frac{dy}{\left|x-y\right|^{N-2}}\\
&\leq
c_1\int_{B(x,\eta_0)}\frac{dy}{\left|x-y\right|^{N-2}}+c_1\int_{A\sla
B(x,\eta_0)}
\frac{dy}{\eta_0^{N-2}}\\
&\leq c_2\eta_0^2+c_2\frac{m(A)}{\eta_0^{N-2}}.
\end{split}
\end{displaymath}
Here and in all the following,  $m$ denotes the Lebesgue measure in
\rn. Take $\eta_0= \sqrt{\varepsilon/2c_2}$ and
$\eta=\varepsilon\eta_0^{N-2}/2c_2$.
 Then  for every  Borel subset $A$ of $D$  such that
$m(A)<\eta$ we have
$$
\int_A G_D(x,y)\, dy\leq \varepsilon.
$$
Hence, the uniform integrability of the family $\left\{G_D(x,\cdot);
x\in D\right\}$ is shown. Therefore, in virtue of Vitali's
convergence theorem (see, e.g; \cite{rud}),  we conclude that for
every $z\in D$,
\begin{displaymath}
\begin{split}
\lim_{x\to z}\sup_{\|u\|\leq M}\left|\int_D G_D(x,y)u(y)dy-\int_DG_D(z,y)u(y)dy\right|\\
\qquad\qquad\leq M \lim_{x\to
z}\int_D\left|G_D(x,y)-G_D(z,y)\right|dy= 0.
\end{split}
\end{displaymath}
This means that   the family $\left\{G_D(x,\cdot); x\in D\right\}$
is equicontinuous  which finishes the proof of the lemma.
\end{proof}

Existence of solutions to semilinear Dirichlet problems of
kind~(\ref{e31}) was widely studied in the literature considering
 various hypotheses on the function~$\varphi$ (see, e.g.,
 \cite{baha02,dyn00,elm,8}). In our setting, we get the following
 theorem.

\begin{thm}\label{thm81}
For every $f\in \cc{C}^+(\partial D),$ there exits one and only one
function $u\in\cc{C}^+(\overline{D})$ satisfying
problem~(\ref{e31}). Furthermore, a bounded Borel function~$u$
on~$D$ is a solution to~(\ref{e31}) if and only if
$u+G_D\varphi(u)=H_Df.$
\end{thm}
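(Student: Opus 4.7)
My plan is to split the argument into three steps: first the integral-equation reformulation, then uniqueness via Lemma~\ref{cprin}, then existence via a truncated Schauder fixed-point argument using Lemma~\ref{l221}.

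\textbf{Step 1 (Equivalence with the integral equation).} Given a bounded Borel $u$ on $D$, $\varphi(u)$ is bounded Borel, so $G_D\varphi(u)$ is continuous on $\overline D$, vanishes on $\partial D$, and satisfies $\Delta G_D\varphi(u)=-2\varphi(u)$ in the distributional sense, as recalled right before Lemma~\ref{cprin}. If $u$ solves~(\ref{e31}), then $u+G_D\varphi(u)$ is harmonic in~$D$ and equals $f$ on $\partial D$, hence equals $H_D f$ by uniqueness of the classical Dirichlet problem; the converse direction follows by applying $\Delta$ to $u=H_Df-G_D\varphi(u)$ and reading off boundary values (which also forces $u$ itself to be continuous on $\overline D$).

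\textbf{Step 2 (Uniqueness).} Extend $\varphi$ to $\bb{R}$ by $\varphi\equiv 0$ on $(-\infty,0)$; continuity and monotonicity are preserved, so $\Psi(t):=2\varphi(t)$ fits the hypotheses of Lemma~\ref{cprin}. Given two solutions $u_1,u_2\in\cc{C}^+(\overline D)$, their boundary values coincide, and applying the comparison principle in both directions yields $u_1=u_2$.

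\textbf{Step 3 (Existence).} This is the main hurdle, because $\varphi$ is not assumed bounded and the obvious Schauder operator need not preserve any ball. My remedy is to truncate: set
\[
\tilde\varphi(t):=\varphi\bigl((t\vee 0)\wedge\|f\|\bigr),
\]
which is continuous, nondecreasing, bounded by $M:=\varphi(\|f\|)$, and vanishes on $(-\infty,0]$. Define $T\colon\cc{C}(\overline D)\to\cc{C}(\overline D)$ by $Tu:=H_Df-G_D\tilde\varphi(u)$. With $R:=\|f\|+M\sup_{x\in D}E^x[\tau_D]$, the closed ball $B_R$ is invariant under~$T$; continuity of $T$ comes from the uniform continuity of $\tilde\varphi$ on bounded sets together with the estimate $\|G_Dv\|\leq\|v\|\,\sup_D E^x[\tau_D]$; compactness of $T(B_R)$ is handed to us directly by Lemma~\ref{l221} applied to $\{G_D v:\|v\|\leq M\}$. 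Schauder's fixed-point theorem then yields $u\in B_R$ with $u+G_D\tilde\varphi(u)=H_Df$.

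The last task is to remove the truncation by proving $0\leq u\leq H_Df$, after which $\tilde\varphi(u)=\varphi(u)$ and Step~1 concludes. The bound $u\leq H_Df$ is immediate from $G_D\tilde\varphi(u)\geq 0$. For $u\geq 0$, I would examine the open set $\Omega:=\{u<0\}$: on $\Omega$ one has $\tilde\varphi(u)\equiv 0$, so $u$ is harmonic there; since $u=f>0$ on $\partial D$ forces $\overline\Omega\subset D$ with $u=0$ on $\partial\Omega$, the classical minimum principle for harmonic functions gives $\Omega=\emptyset$. Designing this truncation and then verifying the a posteriori bounds that justify dropping it is the delicate point of the whole proof.
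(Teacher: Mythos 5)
Your proposal is correct and follows essentially the same route as the paper: reformulate as the integral equation $u+G_D\varphi(u)=H_Df$, get uniqueness from Lemma~\ref{cprin}, and obtain existence by truncating $\varphi$ at level $\varphi(\|f\|)$, applying Schauder's theorem on a ball of $\cc{C}(\overline{D})$ with compactness supplied by Lemma~\ref{l221}, and then removing the truncation via the a posteriori bounds $0\leq u\leq \|f\|$. The only differences are cosmetic: the paper truncates by the odd extension $g(t)=\inf(\varphi(t),\varphi(\|f\|))$ and derives $0\leq u\leq\|f\|$ from the comparison principle, whereas you clamp $t$ to $[0,\|f\|]$ and rerun the minimum-principle argument on $\{u<0\}$ by hand.
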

\begin{proof} By a classical computation, it is not hard to
establish the second part of the theorem. We also observe that, by
the comparison principle (Lemma~\ref{cprin}),  problem~(\ref{e31})
possesses at most one solution. So, it remains to prove the
existence of a solution to~(\ref{e31}). Take $f\in \cc{C}^+(\partial
D),$ $a=\left\|f\right\|$, $M=a+\varphi(a)\sup_{x\in D}E^x[\tau_D]$
and define $
\Lambda=\left\{u\in
\cc{C}(\overline{D});\left\|u\right\|\leq M\right\}.$
 Let $h=H_Df$ and consider the operator
$T:\Lambda\rightarrow
\cc{C}(\overline{D})$  defined by
$$Tu(x)=h(x)-E^x\left[\int_{0}^{\tau_D}g(u(X_s))ds\right],\quad \ x\in D,$$
where~$g$ is the real-valued odd function given by
$g(t)=\inf(\varphi(t),\varphi(a))$ for every   $t\geq 0$. Since
$\left|g(t)\right|\leq
\varphi(a)$ for every $t\in\bb{R},$ we get
$$\left|Tu(x)\right|\leq M$$
for every $x\in D$ and every $u\in\Lambda.$ This implies that
$T(\Lambda)\subset \Lambda.$ Now, let  $(u_n)_{n\geq 0}$ be a
sequence in $\Lambda$ converging uniformly to
  $u\in\Lambda.$ Let $\varepsilon >0.$ Since $g$ is uniformly continuous in
  $[-M,M],$ we deduce that there exists
    $n_0\in\mathbb{N}$ such that for every $n\geq n_0$ and $s\in[0,\tau_D]$
$$\left|g(u_n(X_s))-g(u(X_s))\right|<\varepsilon.$$
It follows that for every  $n\geq n_0$ and  $x\in D$,
\begin{displaymath}
\begin{split}
\left|Tu_n(x)-Tu(x)\right|&=\left|E^x\left[\int_{0}^{\tau_D}g(u_n(X_s))ds\right]
-E^x\left[\int_{0}^{\tau_D}g(u(X_s))ds\right]\right|\\
&\leq E^x\left[\int_{0}^{\tau_D}\left|g(u_n(X_s))-g(u(X_s))\right|ds\right]\\
&\leq \varepsilon \sup_{x\in D}E^x[\tau_D].
\end{split}
\end{displaymath}
This shows that  $(Tu_n)_{n\geq 0}$ converges uniformly to $Tu.$ We
then conclude that $T$ is a continuous operator. On the other hand,
$\Lambda$ is a closed bounded convex subset of
$\cc{C}(\overline{D}).$ Moreover, in virtue of Lemma~\ref{l221},
$T(\Lambda)$ is relatively compact. Thus, the Schauder's fixed point
theorem ensures  the existence of  a function  $u\in \Lambda$ such
that $u=h-G_Dg(u).$ Applying the comparison principle, we obtain
that  $0\leq u\leq a$  and so  $g(u)= \varphi(u).$ Hence, the proof
is finished.
\end{proof}

The unique solution to  problem (\ref{e31}) will be always   denoted
by $H_D^\varphi f.$ However, in the particular  case where
$\varphi:t\mapsto t^p,p>0,$ we may   write  $H_D^pf$ instead of
$H_D^\varphi f.$

\section{Proportionality  of $H_D^\varphi f$ and $H_Df$}

In the sequel, we suppose that $D$ is a Lipschitz bounded domain of
$\mathbb{R}^N$. We recall  the Feynman-Kac theorem  (see
\cite[Theorem 4.7]{5}) which plays an important role in what
follows: for every $f\in
\cc{C}^+(\partial D)$ and  $q \in\cc{B}_b^+(D),$  the function
$v\in\cc{C}(\overline{D})$ given by
\begin{equation}\label{e51}
v(x)=E^x\left[f(X_{\tau_D})\exp\bp-\int_{0}^{\tau_D}q(X_s)\,ds\ep\right],\quad
x\in D,
\end{equation}
 is the unique solution of the problem
 $$\left\{\begin{array}{rrll}\Delta v&=&2q v&\textrm{in } D,\\
 v&=&f&\textrm{on } \partial D.
 \end{array}\right.$$
 Let us notice  that $v$ given by (\ref{e51}) satisfies the following integral equation:
$$v(x)=h(x)-\int_D G_D(x,y)q(y)v(y)dy,\quad x\in D.$$

 Our first result in this section is the following:
\begin{thm}\label{t7}
Assume that
\begin{equation}\label{lim}
\limsup_{t\rightarrow 0}\frac{\varphi(t)}{t}<\infty.
\end{equation}
Then $H_D^\varphi f \approx H_Df$ for every function
$f\in\cc{C}^+(\partial D).$
\end{thm}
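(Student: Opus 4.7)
The plan is to linearize the semilinear equation along the solution $u:=H_D^\varphi f$ using hypothesis~(\ref{lim}), compare the resulting linear problem to the harmonic function $h:=H_Df$ via the Feynman--Kac formula, and then transport the resulting bound through the Doob $h$-transform. The upper bound $u\leq h$ is immediate from Theorem~\ref{thm81}, which gives $u+G_D\varphi(u)=h$ with $\varphi(u)\geq 0$. For the reverse bound, hypothesis (\ref{lim}) furnishes $t_0>0$ and $M>0$ with $\varphi(t)\leq Mt$ on $[0,t_0]$; since $\varphi$ is nondecreasing and $0\leq u\leq\|f\|$, this extends to a pointwise linear majorization $\varphi(t)\leq Ct$ on $[0,\|f\|]$ for some $C=C(\varphi,\|f\|)>0$, and therefore $\Delta u=2\varphi(u)\leq 2Cu$ in $D$.

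Next, let $v$ denote the unique solution of $\Delta v=2Cv$ in $D$ with $v=f$ on $\partial D$. Lemma~\ref{cprin} applied with $\Psi(t):=2Ct$ to $u$ and $v$ gives $u\geq v$ in $D$, so it suffices to prove $v\geq c\,h$ for some $c>0$. The Feynman--Kac representation from this section yields
\[
v(x)=E^x\bint f(X_{\tau_D})\exp\bp-C\tau_D\ep\eint.
\]

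Since $h$ is continuous up to $\partial D$ with $h=f$ there, the defining relation of the $h$-conditioned Brownian motion recalled in Section~2, applied to $\tau_D\wedge n$ and passed to the limit, provides the identification
\[
\frac{v(x)}{h(x)}=E^x_h\bint\exp\bp-C\tau_D\ep\eint.
\]
Jensen's inequality for the convex function $t\mapsto e^{-Ct}$ bounds the right-hand side below by $\exp(-C\,E^x_h[\tau_D])$. A direct integration of the conditioned transition density gives the closed form $E^x_h[\tau_D]=G_Dh(x)/h(x)$, and since $h\leq\|f\|$ on $D$ while $h\geq\min_{\partial D}f>0$ on $\overline D$ (by the minimum principle for harmonic functions, together with positivity of $f$ on the compact set $\partial D$), one obtains
\[
\sup_{x\in D}E^x_h[\tau_D]\leq \frac{\|f\|\sup_{y\in D}E^y[\tau_D]}{\min_{\partial D}f}<\infty.
\]
This delivers the desired $c>0$ with $v\geq c\,h$, and combined with $u\geq v$ and the upper bound $u\leq h$ the proof is complete.

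The structural ingredients that carry the argument are the $h$-transform identification $v/h=E^x_h[e^{-C\tau_D}]$ and the exit-time formula $E^x_h[\tau_D]=G_Dh(x)/h(x)$; each follows directly from the Section~2 definition of $P^x_h$, the only mildly delicate point being the truncation $\tau_D\wedge n$ needed to handle the stopping time. Given these identities, the pointwise linearization $\varphi(u)\leq Cu$ is transported into the quantitative two-sided comparison $H_D^\varphi f\approx H_Df$.
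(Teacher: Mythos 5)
Your argument takes a genuinely different route from the paper's. The paper keeps the variable potential $q=\varphi(u)/u\,1_{\{u>0\}}$ (bounded by~(\ref{lim})), conditions on boundary points, and invokes the conditional gauge theorem of Chung--Zhao to obtain uniform two-sided bounds on $w(x,z)=E_z^x\bigl[\exp\bigl(-\int_0^{\tau_D}q(X_t)\,dt\bigr)\bigr]$, whence $u=\int_{\partial D}w(\cdot,z)f(z)H_D(\cdot,dz)\approx H_Df$. You instead majorize $\varphi(t)\leq Ct$ on $[0,\|f\|]$, compare $u$ with the solution $v$ of the linear problem with \emph{constant} potential via Lemma~\ref{cprin}, and bound $v/h=E^x_h[e^{-C\tau_D}]$ from below by Jensen. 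The two identities you rely on, $v(x)/h(x)=E^x_h[e^{-C\tau_D}]$ and $E^x_h[\tau_D]=G_Dh(x)/h(x)$, are both correct; the first is most cleanly obtained by writing $h(x)-v(x)=C\int_0^\infty e^{-Ct}E^x[f(X_{\tau_D}),\,\tau_D>t]\,dt$ and applying the Markov property, since the defining relation of $P^x_h$ recalled in Section~2 is stated only for deterministic times and the truncation at $\tau_D\wedge n$ you allude to needs this extra step. Up to the final estimate the chain is sound, avoids the conditional gauge theorem entirely, and would work in any regular bounded domain.

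There is, however, a genuine gap at the last step. The paper's convention is that $\cc{C}^+(\partial D)$ consists of \emph{nonnegative} continuous functions, and the paper's own proof explicitly treats any nontrivial such $f$, i.e.\ $f\geq 0$ with $H_Df>0$ in $D$ but possibly $f=0$ on part of $\partial D$. For such $f$ your bound $h\geq\min_{\partial D}f>0$ fails, and with it the uniform bound on $E^x_h[\tau_D]$. What is actually needed is $\sup_{x\in D}G_Dh(x)/h(x)<\infty$ for every bounded positive harmonic $h$, i.e.\ boundedness of the expected lifetime of the $h$-conditioned Brownian motion; for Lipschitz domains (the standing hypothesis of this section) this is Cranston's theorem, and for $C^{1,1}$ domains it follows from Lemma~\ref{lsupfini} later in the paper --- in either case a result of essentially the same depth as the conditional gauge theorem you set out to avoid. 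So either restrict the claim to $f$ bounded below by a positive constant on $\partial D$, or supply the conditional-lifetime bound to cover the theorem as stated.
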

\begin{proof} Let $f\in\cc{C}^+(\partial D)$ be nontrivial, that is $h=H_Df>0$ in $D.$ Let $u=H_D^\varphi f$
and define  $$q:=\frac{\varphi(u)}{u}1_{\{u>0\}}.$$ Then $q$ is a
positive bounded function
 in $D$  by
(\ref{lim}), and $u$ satisfies the problem
\begin{equation}
\left\{\begin{array}{rlll}\Delta u&=&2q\,u& \mbox{in}\ D,\\
u&=&f&\mbox{on}\ \partial D.\end{array}\right.
\end{equation}
 We define
$$
w(x,z)=E_z^x\bint\exp\bp-\int_{0}^{\tau_D}q(X_t)dt\ep\eint,\qquad
x\in D,z\in\partial D.
$$
By  Feynman-Kac theorem and \cite[Proposition 5.12]{5}, we have
\begin{eqnarray}
u(x)&=&E^x\left[f(X_{\tau_D})\exp\bp-\int_{0}^{\tau_D}q(X_s)\,ds\ep\right]\nonumber\\
&=&\int_{\partial D} w(x,z)f(z)H_D(x,dz).\label{e213}
\end{eqnarray}
Since for every $x\in D$
   $$ E^x\bint \exp\bp-\int_{0}^{\tau_D}q(X_s)\,ds\ep\eint<\infty,$$
     by  \cite[Theorem 7.6]{5} there exists $c>0$ such that
\begin{equation}\label{e212}
\frac{1}{c}\leq w(x,z)\leq c,\qquad x\in D,z\in\partial D.
\end{equation}
Combining (\ref{e213})  and (\ref{e212}) we conclude that for every
$x\in D,$
\begin{displaymath}
\begin{split}
\frac{1}{c}H_Df(x)&=\frac{1}{c}\int_{\partial D}f(z)H_D\left(x,dz\right)\\
&\leq u(x)\\
&\leq c\int_{\partial D}f(z)H_D\left(x,dz\right)=cH_Df(x).
\end{split}
\end{displaymath}
Hence, $H_Df\approx H_D^\varphi f.$
\end{proof}

Let us notice that the hypothesis mentioned in the previous theorem
will be trivially  satisfied provided  the function $t\mapsto
\varphi(t)/t$ is nondecreasing or if it is bounded and
nonincreasing on $]0,\infty[$. In particular, it follows that
$H_D^\varphi f\approx H_Df$ for every function
$f\in\cc{C}^+(\partial D)$ if the function $\varphi$ is given
$$
\varphi(t)=t^p\quad\mbox{with }p\geq 1\quad\mbox{or}\quad\varphi(t)=\log
(1+t).
$$

 We shall write $H_D^\varphi\approx H_D$ if $H_D^\varphi f\approx H_Df$ for every
function $f\in\cc{C}^+(\partial D).$ Hence, by Theorem~\ref{t7},
 $H_D^p\approx H_D$ for every $p\geq 1$. This was  established
 by Atar, Athreya and Chen in~\cite{2}. In the same paper,
 the authors conjectured that $H_D^p\not\approx H_D$ for every $
 0<p<1$.  In the following, we
shall prove this conjecture. More precisely, we  give a sufficient
condition on~$\varphi$ under which $H_D^\varphi\not\approx H_D.$

\begin{thm}\label{nonpro}
 Assume that there exists $\varepsilon >0$ such that
\begin{equation}\label{e201}
\int_{0}^{\varepsilon}\bp \int_{0}^{s}\varphi(r)\,dr\ep^{-\frac{1}{2}}ds<\infty.
\end{equation}
 Then there exists
 $f\in\cc{C}^+(\partial D)$ such that $H_D^\varphi f\not\approx H_Df.$
\end{thm}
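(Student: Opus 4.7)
The plan is to show that proportionality already fails for a constant boundary datum. Fix any ball $B := B(x_0, R)$ with $\overline{B} \subset D$ and take $f \equiv \lambda$ with $\lambda > 0$ to be chosen. Then $h := H_D f \equiv \lambda$. Since $\Delta u = 2\varphi(u) \geq 0$, the function $u := H_D^\varphi f$ is subharmonic and the maximum principle gives $u \leq \lambda$ on $\overline{D}$; in particular $u \leq \lambda$ on $\partial B$. By Theorem~\ref{thm81}, $u|_B$ coincides with $H_B^\varphi(u|_{\partial B})$, and Lemma~\ref{cprin} (applied with $\Psi = 2\varphi$) makes $g \mapsto H_B^\varphi g$ monotone in its boundary datum, so $u|_B \leq H_B^\varphi(\lambda)$. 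Rotational symmetry and uniqueness force $H_B^\varphi(\lambda)(x) = v_\lambda(|x - x_0|)$, where $v_\lambda : [0,R] \to \mathbb{R}_+$ satisfies $v_\lambda'' + \frac{N-1}{r}v_\lambda' = 2\varphi(v_\lambda)$, $v_\lambda'(0) = 0$, $v_\lambda(R) = \lambda$.

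The heart of the argument is then to use (\ref{e241}) to show $v_\lambda(0) = 0$ for all sufficiently small $\lambda > 0$; granting this, $u(x_0) \leq v_\lambda(0) = 0$ together with $u \geq 0$ forces $u(x_0) = 0$, while $h(x_0) = \lambda > 0$, so no $c > 0$ can satisfy $u \geq ch$ on $D$, and proportionality fails for $f \equiv \lambda$. The condition (\ref{e241}) is the Osgood-type threshold for failure of uniqueness of the semilinear ODE at the rest state $v = v' = 0$. To exploit it, I would first build the one-dimensional auxiliary function $V : [0, T] \to [0, \infty)$ with $V'' = 2\varphi(V)$, $V(0) = V'(0) = 0$, $V > 0$ on $(0, T)$: multiplying by $V'$ yields the first integral $V' = 2\sqrt{F(V)}$ with $F(s) := \int_0^s \varphi$, and (\ref{e241}) precisely ensures that the implicit relation $s = \int_0^{V(s)} dw/(2\sqrt{F(w)})$ defines a non-trivial increasing $V$, with $V^{-1}(\lambda) \to 0$ as $\lambda \to 0^+$.

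The next step is to upgrade this one-dimensional trajectory into a genuine radial dead-core solution of the $N$-dimensional problem on $B$. For each $a \in (0, R)$, consider the ODE $v'' + \frac{N-1}{r}v' = 2\varphi(v)$ on $[a,R]$ with the degenerate initial data $v(a) = v'(a) = 0$. On such a shell the coefficient $(N-1)/r$ is bounded, so a perturbation of the one-dimensional construction (standard shooting, or a passage to the limit in the Lipschitz IVPs with $v'(a) = 1/n$) yields a non-trivial continuation. Extending by $0$ on $[0,a]$ produces a $C^2$ function (the matching conditions $v(a) = v'(a) = v''(a) = 2\varphi(0) = 0$ hold) satisfying $\Delta v = 2\varphi(v)$ throughout $B$. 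The map $a \mapsto v(R;a)$ is continuous, tends to $0$ as $a \to R^-$, and to some $\lambda_0 > 0$ as $a \to 0^+$; the intermediate value theorem then attains every $\lambda \in (0, \lambda_0)$, producing a radial dead-core solution with boundary value $\lambda$, which by the uniqueness of Theorem~\ref{thm81} must coincide with $v_\lambda$. Hence $v_\lambda(0) = 0$ for every $\lambda \in (0, \lambda_0)$.

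Picking any such $\lambda$ and setting $f \equiv \lambda$ completes the proof by the reduction in the first paragraph. The main obstacle lies in the construction of the third paragraph: unlike the one-dimensional case, the radial equation has no explicit first integral of the form $V' = 2\sqrt{F(V)}$, so the non-trivial continuation from the degenerate data $(v(a), v'(a)) = (0,0)$ must be produced by an ODE perturbation argument that crucially uses the boundedness of the singular coefficient $(N-1)/r$ on the shell $[a, R]$ together with (\ref{e241}); the subsequent continuity and range analysis of $a \mapsto v(R;a)$ also requires care near the endpoints of the shooting interval.
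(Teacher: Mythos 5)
Your reduction in the first paragraph is sound, and your overall strategy is genuinely different from the paper's: if for some ball $B=B(x_0,R)\subset D$ and some $\lambda>0$ the radial solution $H_B^\varphi(\lambda)$ vanishes at $x_0$, then $f\equiv\lambda$ indeed witnesses $H_D^\varphi f\not\approx H_Df$ (failure at an interior point), and this would even sharpen the paper's result, since your $f$ is strictly positive while the paper's counterexample datum vanishes at a boundary point. However, the gap you flag in your third paragraph is a genuine one and it sits at the heart of the proof. The existence of a nontrivial continuation of $v''+\frac{N-1}{r}v'=2\varphi(v)$ from the degenerate data $v(a)=v'(a)=0$ does not follow from the boundedness of $(N-1)/r$ on the shell together with (\ref{e201}): the damping term works \emph{against} escape from the rest state, the limit of the regularized solutions with $v_n'(a)=1/n$ could a priori be the trivial solution, and the naive radial transplant $x\mapsto V(|x-x_0|-a)_+$ of the one-dimensional profile satisfies $\Delta v\geq 2\varphi(v)$, i.e.\ it is a barrier on the \emph{wrong} side for producing an upper bound on $H_B^\varphi(\lambda)$. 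To close the gap you need a lower bound that is uniform in $n$: setting $E=(v_n')^2$ one gets $E'\geq 4(F\circ v_n)'-\frac{2(N-1)}{a}E$ on $[a,R]$, hence by an integrating factor $v_n'\geq 2c_a\sqrt{F(v_n)}$ with $c_a=e^{-(N-1)(R-a)/a}$, and therefore $v_n(r)\geq V\bp c_a(r-a)\ep>0$ for $r>a$, where $V=Q^{-1}$; this is precisely where (\ref{e201}) enters, and it is the step your sketch omits. Separately, the continuity of $a\mapsto v(R;a)$ used for your intermediate value argument is doubtful given the non-uniqueness of the degenerate IVP, but that step is dispensable: a single admissible pair $(a,\lambda)$ suffices for the theorem.

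For comparison, the paper avoids all radial ODE analysis. It places $D$ in $\{x_1>0\}\cap B(0,\rho)$ with $0\in\partial D$ and uses the exact flat solution $u(x)=Q^{-1}(x_1)$, for which $\Delta u=\partial_{11}u$ carries no curvature term and the first integral is available verbatim; non-proportionality is then detected at the boundary point $0$ by comparing $u(x)=Q^{-1}(x_1)=o(x_1)$ with $h\approx x_1$ via the boundary Harnack principle. That argument is a few lines long, at the cost of a boundary datum that is only nonnegative. If you supply the Gronwall-type estimate above and drop the intermediate value step, your proof goes through and yields the stronger conclusion that proportionality already fails for constant boundary data.
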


\begin{proof} We easily observe, in virtue of condition (\ref{e201}), that the function $Q$
 defined for every $t\geq 0$ by
$$
Q(t)=\frac{1}{2}\int_{0}^{t}\bp\int_{0}^{s}\varphi(r)\,dr\ep^{-\frac{1}{2}}ds
$$
 is  increasing continuous on
$[0,\infty[$,  twice differentiable on $]0,\infty[$ and   invertible
from $[0,\infty[$ to $[0,\bar\rho[$ where
$\bar\rho:=\lim_{t\to\infty} Q(t)$ (notice that $\bar\rho$ may be
infinite). Without loss of generality, we assume that
$$
D\subset \bb{R}^N_+\cap B(0,\rho)
\quad\mbox{ and }\quad 0\in\partial D,
$$
where $\bb{R}^N_+=\{x=(x_1,\cdots,x_N)\in\bb{R}^N;x_1> 0\}$ and
$0<\rho<\bar\rho$. Consider the function $u$ defined by
$u(x)=R(x_1)$ for every $x=(x_1,\dots,x_n)\in
\overline{D}$ where $R$ denotes  the inverse function of~$Q$. Then,
it is obvious  that
$$
u\in\mathcal{C}(\overline{D})\cap\mathcal{C}^2(D).
$$
Moreover, an elementary calculus yields  that for every $x\in D$,
$$
\Delta u(x)=R^{''}(x_1)=2\varphi(R(x_1))=2\varphi(u(x)).
$$
Hence $u=H_D^\varphi f$ where $f=u_{|\partial D}.$ Let $h=H_D f$ and
consider the harmonic function $g:x\mapsto x_1$. From the boundary
Harnack principle it follows that there exists an open neighborhood
$V$ of $0$ such that
$$
g\approx h \quad\mbox{in }  V\cap D.
$$
On the other hand
$$
\lim_{x\rightarrow 0}\frac{u(x)}{g(x)}=
\lim_{t\to 0}\frac{t}{Q(t)}=0.
$$
Thus $u\not\approx g$ and consequently $u\not\approx h.$
\end{proof}
Since  the function $\varphi:t\mapsto t^p$   satisfies (\ref{e201})
for $0<p<1$, we deduce from the previous theorem that, for small
$p$, $H_D^p\not\approx H_D$ which proves the conjecture given
in~\cite{2}.

In the remainder of this section, we shall proceed to answer the
following question: in the case where (\ref{lim}) fails, for which
function $f\in\cc{C}^+(\partial D),$ the proportionality of $H_Df$
and $H_D^\varphi f$ does hold?

First, the following proposition is easily obtained.
\begin{pro}\label{cspro}
Let  $f\in\cc{C}^+(\partial D)$, $h=H_Df$ and $u=H_D^\varphi f.$ If
\begin{equation}\label{p}
\sup_{x\in D} \frac{1}{h(x)}\int_D G_D(x,y)\varphi(h(y))\,dy<1,
\end{equation}
then $u\approx h$.
\end{pro}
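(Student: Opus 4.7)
My plan is to use the integral equation reformulation provided in Theorem~\ref{thm81}, namely
\[
u + G_D\varphi(u) = h,
\]
which is the engine driving the whole argument. Since $G_D\varphi(u)\geq 0$ (as $\varphi\geq 0$), this identity gives for free the upper bound $u\leq h$ on $D$. Everything then reduces to producing a matching lower bound $u\geq c\,h$ for some $c>0$.

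To get that lower bound, I would exploit monotonicity. Because $\varphi$ is nondecreasing and we already know $u\leq h$, we have $\varphi(u)\leq \varphi(h)$ pointwise, and since $G_D$ is a positive operator this yields
\[
G_D\varphi(u)(x)\ \leq\ G_D\varphi(h)(x),\qquad x\in D.
\]
Substituting back into the integral equation gives $u(x)\geq h(x)-G_D\varphi(h)(x)$.

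Now hypothesis (\ref{p}) is exactly the statement that $G_D\varphi(h)(x)\leq \alpha\,h(x)$ for all $x\in D$, with
\[
\alpha:=\sup_{x\in D}\frac{1}{h(x)}\int_D G_D(x,y)\varphi(h(y))\,dy<1.
\]
Combining the two inequalities yields $u(x)\geq (1-\alpha)h(x)$ on $D$, and together with $u\leq h$ we conclude $u\approx h$ with proportionality constant $c=1/(1-\alpha)$.

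There is really no serious obstacle here: the proof is a three-line exercise once one invokes Theorem~\ref{thm81} and the monotonicity of $\varphi$. The only point worth a moment's care is the division by $h(x)$ in (\ref{p}), which is legitimate because $f\in\mathcal{C}^+(\partial D)$ is understood to be nontrivial (otherwise $u=h=0$ and there is nothing to prove), so that $h=H_Df>0$ throughout $D$ by the strict positivity of harmonic measure on a regular domain.
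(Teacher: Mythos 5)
Your argument is exactly the one the paper intends: it expands the paper's one-line proof ("an immediate consequence of the formula $h=u+G_D\varphi(u)$ and the fact that $\varphi$ is nondecreasing") into the chain $u\leq h$, hence $\varphi(u)\leq\varphi(h)$, hence $u\geq h-G_D\varphi(h)\geq(1-\alpha)h$. The proposal is correct and follows the same route; no further comment is needed.
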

\begin{proof}
It is an immediate consequence of the formula $h=u+G_D\varphi(u)$
and the fact that $\varphi$ is nondecreasing.
\end{proof}

Hence, one direction in solving  the question above is to
investigate functions~$f$ for which  condition~(\ref{p}) is
fulfilled. Let us
 notice that "$<1$" in~(\ref{p}) can not be replaced by "$<\infty$".
 In fact, as will be shown below, for smooth domain~$D$ we always have
\begin{equation}\label{cond}
\sup_{x\in D} \frac{1}{h(x)}\int_D G_D(x,y)\varphi(h(y))\,dy<\infty.
\end{equation}
However, for $\varphi(t)=t^p$ with $0<p<1$, Theorem~\ref{nonpro}
proves that there exists a function $f\in\cc{C}^+(\partial D)$ such
that $u$ and $h$ are not proportional.

From now on, we assume that $D$ is a bounded $C^{1,1}$ domain. Let
$\delta(x):=\inf_{z\in\partial D}|x-z|$  be the Euclidean distance
from $x\in D$ to the boundary of~$D$. We recall that
Zhao~\cite{zha86} established  the following:
   \begin{equation}\label{e122}
  G_D(x,y)\approx \min \bacc\frac{1}{|x-y|^{N-2}},\frac{\delta(x)\delta(y)}{|x-y|^N}\eacc.
  \end{equation}

\begin{lem}\label{lsupfini}
 For every positive harmonic function $h$ in $D$, there exists a
 positive constant $c$ such that for every $x\in D$,
 $$
\int_D G_D(x,y)\,dy\leq c \,h(x).
 $$
 \end{lem}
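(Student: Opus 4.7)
The plan is to sandwich both sides by the distance-to-boundary function $\delta$. Specifically, I would establish separately (i)~$\int_D G_D(x,y)\,dy\leq C_1\,\delta(x)$ and (ii)~$h(x)\geq C_2\,\delta(x)$ with $C_2=C_2(h)>0$, and then conclude with $c:=C_1/C_2$.

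For (ii), I would invoke the Martin integral representation: since $h$ is positive harmonic in $D$, there is a finite nonnegative measure $\mu\not\equiv 0$ on $\partial D$ with $h(x)=\int_{\partial D} K_D(x,z)\,\mu(dz)$. Passing to the limit $y\to z$ in Zhao's estimate~$(\ref{e122})$ and using $K_D(x,z)=\lim_{y\to z} G_D(x,y)/G_D(x_0,y)$, one obtains the two-sided comparison $K_D(x,z)\approx \delta(x)/|x-z|^N$. Since $|x-z|\leq\mathrm{diam}(D)$, this yields $K_D(x,z)\geq c_0\,\delta(x)/\mathrm{diam}(D)^N$ uniformly in $z\in\partial D$; integrating against $\mu$ gives $h(x)\geq c_0\,\mu(\partial D)\,\delta(x)/\mathrm{diam}(D)^N$, which is the desired lower bound since $\mu\not\equiv 0$.

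For (i), I would observe that $w:=\tfrac12 G_D 1$ solves $-\Delta w=1$ in $D$ with $w=0$ on $\partial D$. Because $D$ is $C^{1,1}$, classical elliptic regularity (Calder\'on--Zygmund) yields $w\in C^{1,\alpha}(\overline{D})$, and integrating $\nabla w$ along the segment from $x$ to its nearest boundary point then gives $w(x)\leq \|\nabla w\|_\infty\,\delta(x)$. An alternative, more geometric route exploits the uniform exterior ball condition of $C^{1,1}$ domains: comparing $E^x[\tau_D]$ with the exit time of Brownian motion from an annulus surrounding a fixed exterior ball---for which an explicit formula is available---produces the same bound. I expect (i) to be the main technical obstacle: a direct attack via~$(\ref{e122})$, splitting $\int_D G_D(x,y)\,dy$ at $|x-y|=\delta(x)$, seems to yield only the weaker bound $G_D 1(x)\leq C\delta(x)\bigl(1+|\log\delta(x)|\bigr)$, so the sharp linear estimate appears to require an appeal to elliptic regularity or to the geometric structure of $\partial D$.
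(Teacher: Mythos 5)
Your proof is correct, and your lower bound $h(x)\geq C_2\,\delta(x)$ is obtained exactly as in the paper: the Martin representation of $h$ combined with the estimate $K_D(x,z)\geq c\,\delta(x)/|x-z|^N$, extracted from Zhao's bound (\ref{e122}) by letting $y\to z$ in the quotient $G_D(x,y)/G_D(x_0,y)$. Where you genuinely diverge is the upper bound $\int_D G_D(x,y)\,dy\leq C_1\,\delta(x)$: the paper does not appeal to elliptic regularity at all, but quotes a second estimate from Zhao's paper, namely $G_D(x,y)\leq c_4\,\delta(x)/|x-y|^{N-1}$, and integrates it directly, using only that $\sup_{x\in D}\int_D |x-y|^{1-N}\,dy<\infty$. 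Your route through the torsion function $w=\tfrac12 G_D1$ (which solves $-\Delta w=1$ with $w=0$ on $\partial D$, hence lies in $C^{1,\alpha}(\overline D)$ on a $C^{1,1}$ domain, hence satisfies $w\leq\|\nabla w\|_\infty\,\delta$) is valid and arguably more robust, since it uses only regularity theory rather than sharp Green function asymptotics; your probabilistic alternative via the uniform exterior ball condition also works. One remark: your worry that a direct attack via (\ref{e122}) loses a logarithm is unfounded. The log appears only if one crudely bounds $\delta(y)$ by $\mathrm{diam}(D)$ on the region $|x-y|\geq\delta(x)$; using instead $\delta(y)\leq\delta(x)+|x-y|$ one gets
$$
\min\left\{\frac{1}{|x-y|^{N-2}},\frac{\delta(x)\delta(y)}{|x-y|^{N}}\right\}\leq \frac{2\,\delta(x)}{|x-y|^{N-1}},
$$
which is precisely the Zhao estimate the paper invokes, and its integral over $D$ is $O(\delta(x))$ with no logarithmic loss.
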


\begin{proof} Let $h$ be a positive harmonic function in $D$.
 Since the Euclidian boundary of~$D$ coincides
with the Martin one, there exists a positive Borel measure~$\nu$ on
$\partial D$ such that
\begin{equation}\label{mar}
h=\int_{\partial D}K_D(\cdot,z)\,d\nu(z).
\end{equation}
We claim that there exits $C>0$ such that for every $x\in D$ and
$z\in\partial D$,
\begin{equation}\label{mar1}
 h(x)\geq C \delta (x).
\end{equation}
Indeed, let $x\in D$ and $z\in \partial D$. Then, it is simple to
observe that $\delta(x)\delta (y)\leq |x-y|^2$ for every $y\in D$
such that $8|y-z|<\delta(x)$. Hence, by~(\ref{e122}) there exists a
constant $c_1>0$ such that for every $y\in D\cap B(z,\delta(x)/8)$,
$$
 G_D(x,y)\geq c_1\frac{\delta(x)\delta(y)}{\left|x-y\right|^N}.
 $$
Again by~(\ref{e122}) there exists  $c_2>0$ such that
 $$
 G_D(x_0,y)\leq c_2\frac{\delta(x_0)\delta(y)}{|x_0-y|^N},
 $$
where $x_0$ denotes, as was mentioned in Section~2, a reference
point. Therefore for every  $y\in D\cap B(z,\delta(x)/8)$,
$$
\frac{G_D(x,y)}{G_D(x_0,y)}\geq
 \frac{c_1\left|x_0-y\right|^N\delta(x)\delta(y)}{c_2\delta(x_0)\delta(y)\left|x-y\right|^N}.
$$
 Whence, letting $y$ tend to $z$ we obtain that
$$
K_D(x,z)\geq c_3\frac{\delta(x)}{\left|x-z\right|^N}
$$
where $c_3$ is a positive constant not depending on $x$ and $z$.
This and formula~(\ref{mar}) yield~(\ref{mar1}). On the other hand,
in~\cite{zha86} it is shown that
 there exists $c_4>0$ such that for every $x,y\in
 D$,
$$
G_D(x,y)\leq c_4\frac{\delta(x)}{\left|x-y\right|^{N-1}}.
$$
Hence, using~(\ref{mar1}) it follows that
\begin{displaymath}
\begin{split}
\sup_{x\in D}\frac{1}{h(x)}\int_DG_D(x,y)dy
&=\frac{c_4}{C} \sup_{x\in
D}\int_D\frac{dy}{\left|x-y\right|^{N-1}}<\infty.
\end{split}
\end{displaymath}
\end{proof}

\begin{thm}
 Assume that
\begin{equation}\label{sc1}
 \lim_{t\rightarrow \infty}\frac{\varphi(t)}{t}=0.
 \end{equation}
 Then for every $f \in \cc{C}^+(\partial D),$ there exists a positive constant $\alpha_f$ such that
    $H_D^\varphi (\alpha f)\approx H_D(\alpha f)$ for every  $\alpha\geq
    \alpha_f$.
    \end{thm}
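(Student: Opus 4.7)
The plan is to reduce the statement to Proposition~\ref{cspro} applied with $\alpha f$ in place of $f$. Writing $h=H_Df$ (so that $H_D(\alpha f)=\alpha h$) and noting that the case $h\equiv 0$ is trivial, it will be enough to produce $\alpha_f>0$ such that
\[
\sup_{x\in D}\frac{1}{\alpha h(x)}\int_D G_D(x,y)\,\varphi(\alpha h(y))\,dy<1\qquad\text{for every }\alpha\geq\alpha_f.
\]
Note that $h$ is bounded on $\overline{D}$, since $f$ is continuous on the compact boundary and $\|h\|=\|f\|$ by the maximum principle.

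The crucial ingredient will be a \emph{linear-plus-constant} majorisation of $\varphi$ provided by the hypothesis $\lim_{t\to\infty}\varphi(t)/t=0$: for every $\varepsilon>0$ there exists $M=M(\varepsilon)>0$ such that
\[
\varphi(t)\leq \varepsilon\,t+\varphi(M)\qquad\text{for every }t\geq 0,
\]
since the limit handles $t\geq M$ and monotonicity of $\varphi$ handles $t\leq M$. Substituting $t=\alpha h(y)$, integrating against $G_D(x,\cdot)$ and dividing by $\alpha h(x)$ splits the target quantity as
\[
\frac{\varepsilon}{h(x)}\int_D G_D(x,y)\,h(y)\,dy+\frac{\varphi(M)}{\alpha\,h(x)}\int_D G_D(x,y)\,dy.
\]

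To control both pieces I would invoke Lemma~\ref{lsupfini}, which supplies a constant $c>0$ with $\int_D G_D(x,y)\,dy\leq c\,h(x)$ uniformly in $x\in D$. Bounding $h(y)\leq\|h\|$ in the first term then yields
\[
\sup_{x\in D}\frac{1}{\alpha h(x)}\int_D G_D(x,y)\,\varphi(\alpha h(y))\,dy\leq \varepsilon\,c\,\|h\|+\frac{\varphi(M)\,c}{\alpha}.
\]
I would first choose $\varepsilon$ small enough that $\varepsilon c\|h\|<\tfrac12$, which fixes $M$, and then set $\alpha_f:=2\varphi(M)c$ to force the second term below $\tfrac12$ whenever $\alpha\geq\alpha_f$. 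Proposition~\ref{cspro} then delivers $H_D^\varphi(\alpha f)\approx \alpha h=H_D(\alpha f)$.

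The argument is short; its real content is twofold. First, sublinearity of $\varphi$ at infinity is exactly what permits a linear-plus-constant majorisation valid on all of $[0,\infty[$, so that after rescaling by $\alpha$ the linear part is uniformly small (independent of $\alpha$) while the constant part is of order $1/\alpha$. Second, Lemma~\ref{lsupfini} is what renders the ``constant'' contribution harmless after division by $\alpha h(x)$: without the comparability $\int_D G_D(x,y)\,dy\lesssim h(x)$, the resulting ratio would blow up near $\partial D$, where $h$ vanishes, and the method would fail. This is the main, if mild, obstacle to anticipate.
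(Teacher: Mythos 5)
Your proof is correct and follows essentially the same route as the paper: both reduce the statement to Proposition~\ref{cspro} and rely on Lemma~\ref{lsupfini} to dominate $\int_D G_D(x,y)\,dy$ by $c\,h(x)$. The paper's version is slightly more direct --- since $\varphi$ is nondecreasing it simply bounds $\varphi(\alpha h(y))\leq\varphi(\alpha\|h\|)$ and then picks $\alpha_f=A/\|h\|$ so that $c\,\varphi(\alpha\|h\|)/\alpha<1$, avoiding your two-term $\varepsilon t+\varphi(M)$ majorisation --- but the substance and the key ingredients are identical.
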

\begin{proof}
Let  $f\in\cc{C}^+(\partial D)$ be  non trivial and let $h=H_D f$.
By the previous lemma, there exists $c>0$ (depending on $h$) such
that for every $\alpha>0$ and every $x\in D$,
$$
G_D\varphi(\alpha h)(x)\leq \varphi(\alpha \|h\|) G_D 1(x)\leq c
\varphi(\alpha \|h\|) h(x).
$$
Therefore
$$
\sup_{x\in D}\frac{G_D\varphi(\alpha h)\left(x\right)}{\alpha
h\left(x\right)}\leq c\frac{\varphi(\alpha \|h\|)}{\alpha}.
$$
On the other hand, by~(\ref{sc1})  there exists $A>0$ such that for
every $t\geq A$,
$$
\frac{\varphi(t)}{t}< \frac{1}{c \|h\|}.
$$
Take $\alpha_f:=A/\|h\|$. Then for every $\alpha\geq \alpha_f$, we
have
$$
\sup_{x\in D}\frac{G_D\varphi(\alpha h)\left(x\right)}{\alpha
h\left(x\right)}<1,
$$
which yields, by Proposition~\ref{cspro}, that $H_D^\varphi (\alpha
f)\approx H_D(\alpha f)$.
\end{proof}

\section{More about problem (1)}

This last section is devoted to investigate the problem (1) in the
case where $\varphi$ is nonincreasing. Let us notice  that, in this
setting, we do not guarantee the existence nor the uniqueness of the
solution to problem~(1) and hence the operator $H_D^\varphi$ is no
longer defined. As above, we assume that~$D$ is a $C^{1,1}$ bounded
domain of $\mathbb{R}^N,$ $N\geq 3.$
\begin{thm}
Let $\varphi:\mathbb{R}_+\to \mathbb{R}_+$ be a continuous
nonincreasing function, $f\in\cc{C}^+(\partial D)$ and let $h=H_Df$
such that
\begin{equation}\label{sc2}
\sup_{x\in D}E_h^x\bint
\int_0^{\tau_D}\frac{1}{h(X_s)}ds\eint \leq \frac{1}{e\,\varphi(0)}.
\end{equation}
Then the  problem (\ref{e31}) possesses  a solution $u\in
\mathcal{C}^+(\overline{D})$ satisfying $u\approx h.$
\end{thm}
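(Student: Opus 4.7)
The plan is to mimic the strategy of Theorem~\ref{thm81} and apply Schauder's fixed point theorem, recasting~(\ref{e31}) as the integral equation $u = h - G_D\varphi(u)$ with $h=H_Df$. The central preliminary step is to convert~(\ref{sc2}) into a pointwise analytic bound. Using the definition of the $h$-conditioned process, for any nonnegative Borel $g$ on $D$,
$$
E_h^x\bint\int_0^{\tau_D} g(X_s)\,ds\eint = \frac{1}{h(x)}\int_D G_D(x,y)\,h(y)g(y)\,dy,
$$
so choosing $g=1/h$ turns (\ref{sc2}) into the clean pointwise estimate
$$
\varphi(0)\,G_D 1(x) \leq h(x)/e\qquad \mbox{for every } x\in D.
$$

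Set $\alpha := 1-1/e$ and introduce
$$
\Lambda := \bacc v\in\cc{C}(\overline D) : \alpha h \leq v \leq h \eacc,
$$
a nonempty (it contains $h$), closed, bounded, convex subset of $\cc{C}(\overline D)$, together with the operator $Tv := h - G_D\varphi(v)$. To run Schauder's theorem I would verify three properties. (i) $T(\Lambda)\subset\Lambda$: for $v\in\Lambda$ the nonincreasing character of $\varphi$ together with $v\geq 0$ gives $0\leq\varphi(v)\leq\varphi(0)$, hence $G_D\varphi(v)\leq\varphi(0)G_D 1\leq h/e$, so $\alpha h\leq Tv\leq h$; moreover $Tv\in\cc{C}(\overline D)$ with $Tv=f$ on $\partial D$, by the standard continuity and boundary vanishing of the Green operator. (ii) $T$ is continuous on $\Lambda$: uniform convergence $v_n\to v$ combined with uniform continuity of $\varphi$ on $[0,\|h\|]$ gives $\varphi(v_n)\to\varphi(v)$ uniformly, and then $\|Tv_n-Tv\|\leq\|\varphi(v_n)-\varphi(v)\|\cdot\|G_D 1\|\to 0$. (iii) $T(\Lambda)$ is relatively compact in $\cc{C}(\overline D)$ by Lemma~\ref{l221}, since $\bacc\varphi(v):v\in\Lambda\eacc$ is uniformly bounded by $\varphi(0)$.

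Schauder's theorem then produces $u\in\Lambda$ satisfying $u = h - G_D\varphi(u)$, which, via the same distributional computation used in Theorem~\ref{thm81}, is equivalent to $u$ solving~(\ref{e31}); the membership $u\in\Lambda$ immediately yields $\alpha h\leq u\leq h$, so $u\approx h$. The main obstacle is step~(i): the set $\Lambda$ must be chosen with a lower endpoint small enough that the crude bound $\varphi(v)\leq\varphi(0)$ keeps $Tv$ above $\alpha h$, and the constant $1/e$ appearing in~(\ref{sc2}) is precisely what forces $\alpha=1-1/e\in(0,1)$ to work. All other steps are direct adaptations of the techniques already deployed for Theorem~\ref{thm81} and Lemma~\ref{l221}, so no new analytic difficulty arises once the probabilistic hypothesis is translated into the bound $\varphi(0)G_D 1\leq h/e$.
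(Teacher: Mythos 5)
Your proof is correct, but it takes a genuinely different (and in fact simpler) route than the paper. Your opening step --- rewriting the hypothesis via $E_h^x\bigl[\int_0^{\tau_D}h(X_s)^{-1}ds\bigr]=G_D1(x)/h(x)$, so that (\ref{sc2}) becomes $\varphi(0)\,G_D1\leq h/e$ --- is exactly what the paper does. From there the paper works with the Feynman--Kac gauge operator $Tu(x)=E^x\bigl[h(X_{\tau_D})\exp\bigl(-\int_0^{\tau_D}\varphi(u(X_s))/u(X_s)\,ds\bigr)\bigr]$ on the set $\Lambda=\{e^{-b}h\leq u\leq h\}$, where $b$ is chosen to solve $be^{-b}=\varphi(0)c$ with $c=\sup_x G_D1(x)/h(x)$; the invariance $T(\Lambda)\subset\Lambda$ is obtained via Jensen's inequality under $E_h^x$, and the constant $1/e$ is consumed precisely as the solvability threshold of $be^{-b}=\varphi(0)c$. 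You instead use the additive operator $Tv=h-G_D\varphi(v)$ on $\Lambda=\{(1-1/e)h\leq v\leq h\}$ and only need the crude monotonicity bound $\varphi(v)\leq\varphi(0)$, after which invariance, continuity (via uniform continuity of $\varphi$ on $[0,\|h\|]$), and compactness (via Lemma~\ref{l221}) are immediate, and the fixed point satisfies the same integral equation $u+G_D\varphi(u)=h$ as in the paper. Your version buys two things: it avoids the $h$-conditioned expectation and Jensen entirely, and it actually yields the better lower bound $(1-1/e)h\approx 0.632\,h$ versus the paper's $e^{-b}h\geq e^{-1}h$; moreover it would go through under the weaker hypothesis $\varphi(0)\sup_x G_D1(x)/h(x)<1$, with lower bound $(1-\varphi(0)c)h$. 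The paper's multiplicative formulation is tighter to the specific constant $1/e$ and is stylistically aligned with the Feynman--Kac machinery of Section~4, but nothing in the statement requires it. The only point worth making explicit in your write-up is that the fixed point obtained from the integral equation is a solution of (\ref{e31}) in the same (distributional) sense used in Theorem~\ref{thm81}, via $\Delta G_D\varphi(u)=-2\varphi(u)$; you reference this, and it is exactly how the paper closes its own argument.
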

\begin{proof} Of course  we assume that $\varphi(0)>0$ and $f$ is non trivial. By hypothesis,
$$
c:=\sup_{x\in D}\frac{1}{h(x)}\int_D G_D(x,y)\,dy \leq
\frac{1}{e\,\varphi(0)}.
$$
It is easily seen that there exists $b>0$ such that $e^b\varphi(0)c=
b.$ We observe that the set
$$
\Lambda =\left\{u\in \cc{C}(\overline{D}); e^{-b}h\leq u\leq h\right\}
$$
is  closed bounded and convex in $\cc{C}(\overline{D}).$ Consider
$T:\Lambda\rightarrow \cc{C}(\overline{D})$ defined by
$$
Tu(x)=E^x\left[h(X_{\tau_D})\exp\left(-\int_{0}^{\tau_D}\frac{\varphi(u(X_s))}{u(X_s)}ds\right)\right],\quad\
x\in D.
$$
Then, it is clear that $Tu\leq h$ for every $u\in \Lambda$.
Furthermore, for every $x\in D,$
\begin{displaymath}
\begin{split}
\frac{Tu(x)}{h(x)}&=E_h^x\left[\exp\left(-\int_{0}^{\tau_{D}}\frac{\varphi(u(X_s))}{u(X_s)}ds\right)\right]\\
&\geq E_h^x\left[\exp\left(-e^{b}\varphi(0)\int_{0}^{\tau_D}\frac{1}{h(X_s)}ds\right)\right]\\
&\geq \exp\left(-e^b\varphi(0)E^x_h\left[\int_{0}^{\tau_D}\frac{1}{h(X_s)}ds\right]\right)\\
&\geq \exp\left(-e^b\varphi(0)c\right)\\
&= \exp(-b).
\end{split}
\end{displaymath}
This yields that $T(\Lambda)\subset\Lambda.$ On the other hand, for
every $u\in\Lambda$ we have
 $$e^{-b}\frac{\varphi(u)}{u}Tu\leq \varphi(0).$$
 So, in virtue of  Lemma~\ref{l221}, we deduce that the family
  $$\left\{\int_DG_D(\cdot ,y)\frac{\varphi(u(y))}{u(y)}Tu(y)dy; u\in\Lambda\right\}$$
is relatively compact  in  $\cc{C}(\overline{D}).$ Seeing that
$$
Tu(x)+\int_D G_D(x , y)\frac{\varphi(u(y))}{u(y)}Tu(y)dy=h(x),\quad
x\in D,$$ we conclude that $T(\Lambda)$ is relatively compact in
$\cc{C}(\overline{D})$ and consequently  $T$ is continuous. Then by
Schauder's fixed point theorem, there exists $u\in \Lambda$ such
that
$$u(x)+\int_D G_D(x,y)\varphi(u(y))dy=h(x),\quad \ x\in D.$$
Hence, $u$ is a solution to the problem (1). Moreover $u\approx h$
since $u\in\Lambda$.
\end{proof}

\begin{cor}
Let $\varphi:\mathbb{R}_+\to \mathbb{R}_+$ be a continuous
nonincreasing function. For every  $f\in\cc{C}^+(\partial D),$ there
exists $\alpha_f>0$ such that for every $\alpha\geq \alpha_f$ the
problem
\begin{equation*}
\left\{\begin{array}{rlll}\Delta u&=&2\varphi(u)& \mbox{in}\ D,\\
u&=&\alpha f&\mbox{on}\ \partial D,\end{array}\right.
\end{equation*}
admits a solution
 $u\in \mathcal{C}^+(\overline{D})$ satisfying $u\approx H_D f.$
\end{cor}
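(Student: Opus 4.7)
The plan is to apply the preceding theorem with boundary data $\alpha f$ in place of $f$, and to check that hypothesis~(\ref{sc2}) becomes automatic once $\alpha$ is chosen large enough. First dispose of the trivial case $\varphi(0)=0$: since $\varphi$ is nonnegative and nonincreasing, this forces $\varphi\equiv 0$, and then $u=\alpha H_D f$ solves the problem with $u\approx H_D f$ holding trivially. So assume $\varphi(0)>0$.

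Set $h=H_D f$, so that $H_D(\alpha f)=\alpha h$. The key observation is that the conditioned transition density $p^D(t,x,y)h(y)/h(x)$ is unchanged when $h$ is replaced by $\alpha h$; hence $P^x_{\alpha h}=P^x_h$, and condition~(\ref{sc2}) applied to the data $\alpha f$ rewrites as
$$
\frac{1}{\alpha}\sup_{x\in D}E_{h}^x\bint \int_0^{\tau_D}\frac{1}{h(X_s)}\,ds\eint \leq \frac{1}{e\,\varphi(0)}.
$$

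To estimate the expectation I combine the Doob $h$-transform identity from Section~2 with Fubini's theorem:
$$
E_h^x\bint \int_0^{\tau_D}\frac{1}{h(X_s)}\,ds\eint = \int_0^{\infty}\!E_h^x\bint \frac{1}{h(X_s)},\,s<\tau_D\eint ds = \frac{1}{h(x)}\int_0^{\infty}\!P^x(s<\tau_D)\,ds = \frac{G_D 1(x)}{h(x)}.
$$
By Lemma~\ref{lsupfini}, the quantity $M:=\sup_{x\in D} G_D 1(x)/h(x)$ is finite. Setting $\alpha_f:=e\,\varphi(0)M$, the displayed inequality is satisfied for every $\alpha\geq \alpha_f$, and the preceding theorem then produces a solution $u\in\cc{C}^+(\overline{D})$ of the stated problem satisfying $u\approx\alpha h\approx H_Df$.

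The only delicate point is that $\alpha$ enters~(\ref{sc2}) in two different places---inside the conditioning $E^x_{\alpha h}$ and inside the integrand $1/(\alpha h)$---but the scale-invariance of the Doob transform disentangles these, turning the scaling into a single factor of $1/\alpha$ out front. After that observation, Lemma~\ref{lsupfini} supplies exactly the uniform bound needed to make $\alpha_f$ finite, and the corollary reduces to a routine application of the previous theorem.
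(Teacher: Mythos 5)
Your proposal is correct and follows essentially the same route as the paper: choose $\alpha_f=e\,\varphi(0)\sup_{x\in D}\frac{1}{h(x)}\int_D G_D(x,y)\,dy$ (finite by Lemma~\ref{lsupfini}) and apply the preceding theorem to the data $\alpha f$. The paper states this in one line; you merely make explicit the scale-invariance $E^x_{\alpha h}=E^x_h$, the identity $E^x_h[\int_0^{\tau_D}h(X_s)^{-1}ds]=G_D1(x)/h(x)$ implicit in the theorem's proof, and the trivial case $\varphi(0)=0$, all of which are correct.
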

\begin{proof} Let $f\in\cc{C}^+(\partial D)$ be non trivial and let
$h=H_Df$. It suffices to consider
$$
\alpha_f=e\varphi(0)\sup_{x\in D}\frac{1}{h(x)}\int_D G_D(x,y)\,dy
$$
and apply the previous theorem for $\alpha f$, $\alpha\geq
\alpha_f$.
\end{proof}

\end{document}